\numberwithin{equation}{section} 
\newcommand{\ud}{\,d} 
\newcommand{\R}{\mathbb{R}}
\newcommand\cof{\operatorname{cof}}
\renewcommand{\div}{\operatorname{div}}
\newcommand{\tir}[1]{\ensuremath{\overline {#1}}} 
\newtheorem{thm}{Theorem}[section] 
\newtheorem{lemma}[thm]{Lemma}
\newtheorem{defn}[thm]{Definition} 
\newtheorem{rem}[thm]{Remark}
\def\whsq{\vbox to 5.8pt 
{\offinterlineskip\hrule 
\hbox to 5.8pt{\vrule height 
5.1pt\hss\vrule height 5.1pt}\hrule}}
\def\<{\langle} 
\def\>{\rangle} 
\def\PP{{\mathop{{\rm I}\kern-.2em{\rm P}}\nolimits}} 
\def\FF{{\mathop{{\rm I}\kern-.2em{\rm F}}\nolimits}}   
\def\ZZ{{\mathop{{\rm I}\kern-.2em{\rm Z}}\nolimits}} 
\newlength{\sidemargin} 
\begin{document}
\title[]{
Standard finite elements for the numerical resolution of the elliptic Monge-Amp\`ere equation: mixed methods}

\thanks{
The author was partially supported by NSF DMS grant No 1319640.}
\author{Gerard Awanou}
\address{Department of Mathematics, Statistics, and Computer Science, M/C 249.
University of Illinois at Chicago, 
Chicago, IL 60607-7045, USA}
\email{awanou@uic.edu}  
\urladdr{http://www.math.uic.edu/\~{}awanou}

\maketitle

\begin{abstract}
We prove a convergence result for a mixed finite element method for the Monge-Amp\`ere equation to its weak solution in the sense of Aleksandrov. The unknowns in the formulation are the scalar variable and the Hessian matrix. 
\end{abstract}

\section{Introduction}
Let $\Omega$ be a convex polygonal domain of $\R^d$ for $d =2, 3$ with boundary $\partial \Omega$. Let $f \in L^{\infty}(\Omega), g \in C(\partial \Omega)$ with $0 < c_0 \leq f \leq c_1$ for constants $c_0, c_1 >0$.  
We assume that $g$ can be extended to a function $\tilde{g} \in C(\tir{\Omega})$ which is  convex in $\Omega$.
We are interested in a mixed finite element method for the nonlinear elliptic Monge-Amp\`ere equation: find a continuous convex function $u$ such that
\begin{align} \label{m1}
\begin{split}
\det D^2 u & = f \, \text{in} \, \Omega\\
u & = g \, \text{on} \, \partial \Omega.
\end{split}
\end{align}
The expression $\det D^2 u$ should be interpreted in the sense of Aleksandrov. We review the notion of Aleksandrov solutions in section \ref{Aleks}. For $u \in C^2(\Omega)$, $\det D^2 u$ is the determinant of the Hessian matrix 
$D^2 u=\bigg( (\partial^2 u) / (\partial x_i \partial x_j)\bigg)_{i,j=1,\ldots, d} $. 

We consider a mixed formulation with unknowns the scalar variable $u$ and the Hessian $D^2 u$. The scalar variable and the components of the Hessian are approximated by Lagrange elements of degree $k \geq d$. 
The method considered in this paper was analyzed from different point of views in \cite{Neilan2013} and  \cite{AwanouLiMixed1} for smooth solutions of \eqref{m1} and for $k \geq 3$. 
The case of quadratic elements in two dimension was handled in \cite{Awanou-Quadratic}. 
But the convergence of the discretization for non smooth solutions was not understood. We present in this paper a theory which explains these results. 

Let $\delta >0$ be a small parameter and let $\widetilde{\Omega}$ be a convex polygonal subdomain of $\Omega$. Let $f_m, g_m \in C^{\infty}(\tir{\Omega})$ such that $0 < c_2 \leq f_m \leq c_3$, $f_m$ converges uniformly to $f$ on $\tir{\Omega}$  and $g_m$ converges uniformly to $\tilde{g}$ on $\tir{\Omega}$ \cite{MongeC1Alex}.
It is known, c.f. \cite{Awanou-Std04v4}  that the  Aleksandrov solution of
\begin{equation}
\det  D^2 u_m = f_m  \ \text{in} \, \Omega, \, u_m=g_m \, \text{on} \, \partial \Omega, 
\label{m1m}
\end{equation}
converges uniformly on compact subsets of $\Omega$ to the  Aleksandrov solution $u$ of \eqref{m1}. We choose $\tilde{m}$ such that $| f(x) - f_{\tilde{m}}(x)| < \delta$, $| \tilde{g}(x) - g_{\tilde{m}}(x)| < \delta$  and $| u(x) - u_{\tilde{m}}(x)| < \delta$ for all $x \in \widetilde{\Omega}$.

Consider the problem: find $\tilde{u} \in C(\tir{\widetilde{\Omega}})$ convex on $\widetilde{\Omega}$ such that 
\begin{equation}
\det  D^2 \tilde{u} = f_{\tilde{m}}  \ \text{in} \, \widetilde{\Omega}, \, \tilde{u}=u_{\tilde{m}} \, \text{on} \, \tir{\Omega} \setminus \widetilde{\Omega}.
\label{m2}
\end{equation}
We prove the uniform convergence on compact subsets of $\widetilde{\Omega}$, for the finite element equations on $\widetilde{\Omega}$, of the discrete scalar variable to the convex Aleksandrov solution of \eqref{m2}. 
By unicity of the Aleksandrov solution $u$ of \eqref{m1}, we have $\tilde{u}=u$ in $\widetilde{\Omega}$ and hence as $\widetilde{\Omega} \to \Omega$, $\tilde{u}  \to u$ uniformly on compact subsets of $\Omega$. 
We may thus choose $\widetilde{\Omega}$ close enough to $\Omega$ such that $g$ is a good approximation of the Dirichlet data $ \tilde{u}=u_{\tilde{m}}$. 
The boundary data $g$ is used as an approximation of the solution on $\Omega \setminus \widetilde{\Omega}$. The solution $u$ of \eqref{m1} can then be approximated within a prescribed accuracy by first choosing $\tilde{m}$,  $\widetilde{\Omega}$ and then $h$ sufficiently small. We emphasize that the solution $\tilde{u}$ of \eqref{m2} is not necessarily smooth.

For the implementation one can simply take $ f_{\tilde{m}}=f$ and $\widetilde{\Omega}=\Omega$. It is in that sense that the results of this paper explains the numerical results obtained in \cite{Lakkis11b,Neilan2013}. For that reason we do not reproduce them in this paper.


We present the notion of Aleksandrov solution of \eqref{m1} using an analytical definition, following \cite{Rauch77}. It is based on approximation by smooth functions. The techniques used in this paper were successfully implemented in the context of standard finite difference discretizations in \cite{Awanou-Std-fdv5} and in the context of standard finite element methods in  \cite{Awanou-Std04v4}. The general methodology consists in
\begin{enumerate}
\item[1-] Prove the convergence and local uniqueness of the solution of the discrete problem when \eqref{m1} has a smooth solution. Using the continuity of the eigenvalues of a matrix as a function of its entries, prove local uniqueness when the discrete problem has a solution which is piecewise strictly convex.
\item[2-] Verify that the numerical method is robust enough to handle the standard tests for non smooth solutions. We emphasize that for non smooth solutions, the implementation of the mixed method should be done as in \cite{Neilan2013,Lakkis11b}. 
\item[3-] Consider a sequence of functions $f_m, g_m \in C^{\infty}(\tir{\Omega})$ such that $0 < c_2 \leq f_m \leq c_3$ for constants $c_2, c_3 >0$, $f_m$ converges uniformly to $f$ on $\tir{\Omega}$  and $g_m$ converges uniformly to $\tilde{g}$ on $\tir{\Omega}$. Consider then the Monge-Amp\`ere equations (with solutions not necessarily smooth)
\begin{equation*} 
\det D^2 u_m =f_m \, \text{in} \, \Omega,  u_m =g_m \, \text{on} \, \partial \Omega.
\end{equation*}
It is known that, see \cite{Awanou-Std04v4} for details, $u_m$ converges to the Aleksandrov solution $u$ of \eqref{m1} uniformly on compact subsets of $\Omega$.
\item[4-] Consider  a sequence of smooth uniformly convex domains $\Omega_s$ increasing to $\Omega$ \cite{Blocki97}, with the property that $\widetilde{\Omega} \subset \Omega_s$ for all $s$, and the problems with smooth solutions \cite{Trudinger08}
\begin{align*} 
\begin{split}
\det D^2 u_{\tilde{m} s}  = f_{\tilde{m}} \, \text{in} \, \Omega_s, 
u_{\tilde{m} s}  = g_{\tilde{m}}  \, \text{on} \, \partial \Omega_s.
\end{split}
\end{align*}
Again  one proves that, see \cite{Awanou-Std04v4}, $u_{\tilde{m}s}$ converges uniformly on compact subsets of $\widetilde{\Omega}$ to $\tilde{u}$ as $s \to \infty$.
\item[5-] Establish that the discrete approximation $u_{ms,h}$ of the smooth function $u_{ms}$, on $\widetilde{\Omega}$ and with boundary data $u_{ms}$, converges uniformly to $u_{ms}$ on $\widetilde{\Omega}$ as $h \to 0$. This usually takes the form of an error estimate with constants depending on derivatives of $u_{ms}$.
\item[6-] 
Because $\widetilde{\Omega}$ is an interior domain, interior Schauder estimates allow to get a uniform bound on the derivatives of $u_{\tilde{m}s}$. In other words, $u_{\tilde{m}s,h}$ converges uniformly to $u_{\tilde{m} s}$ on compact subsets of $\widetilde{\Omega}$ at a rate which depends on $\widetilde{\Omega}$  but is independent of $s$. 

\item[7-] The local equicontinuity of piecewise convex functions allow to take a subsequence in $s$. This gives a piecewise convex finite element function $u_h$ which solves the finite element problem on $\widetilde{\Omega}$. The approximation $u_h$ is shown to converge uniformly on compact subsets of $\widetilde{\Omega}$ to the solution of \eqref{m2}. Local uniqueness of the solution is a consequence of the work done in step 1. See also Remark \ref{local-unique}.

\end{enumerate}
Given the above program, the main technical difficulties consist in completing steps 1 and 7. For steps 1 and 2, in this paper, we build on the work done in \cite{Neilan2013,Lakkis11b,AwanouLiMixed1,Awanou-Quadratic}. 
For each type of discretization we have considered, step 7 requires new ideas. 

Given the definition of Aleksandrov solution, it is natural to expect a discrete version of the comparison principle for a discretization. The lack of such a comparison principle  is related to the difficulty of proving stability of the discretization for smooth solutions without assuming a bound on a high order norm of the solution. For that reason, we introduced the theoretical computational domain $\widetilde{\Omega}$ and fix the parameter $\tilde{m}$ in the regularization of the data.

Our focus on standard discretizations is motivated by the need to allow the efficient tools developed for computational mathematics such as adaptive mesh refinements and multigrid methods to be transferred seamlessly to the context of the Monge-Amp\`ere equation. We expect that the general strategy of this paper can be adapted to the various 
mixed methods proposed in  \cite{Dean2003,Feng2009a,Neilan2013}. 
The main difficulty is an error analysis for smooth solutions of \eqref{m1} with piecewise linear finite element approximations. We consider these and some lower order elements in \cite{Awanou-Quadratic-mixed}. See Remark \ref{weak-s}.

Monge-Amp\`ere equations arise in several applications of increasing importance, e.g. optimal transportation and reflector design. In fact, in optimal transportation problems, the approach through Aleksandrov solutions is more natural as it allows to treat discontinuous right hand sides.

In \cite{Dean2003,GlowinskiICIAM07,Feng2009a,Lakkis11b}, it was suggested that the issue could be approached through the notion of viscosity solution of \eqref{m1}. Both notions of viscosity and Aleksandrov solutions for \eqref{m1} coincide for $f >0$ and continuous on $\tir{\Omega}$ \cite{Guti'errez2001}. The investigation of numerical methods for \eqref{m1} through the notion of viscosity solutions is still an active research area. 
A proven convergence method for \eqref{m1} through the notion of viscosity solution was obtained through monotone finite difference schemes, see for example \cite{Froese13}. For an approach through the notion of Aleksandrov solution for the two dimensional problem, we refer to \cite{Oliker1988}. The geometric approach taken in \cite{Oliker1988} is different from the approach taken in this paper. Other finite element discretizations have been proposed, e.g. \cite{Bohmer2008,Davydov12,Glowinski2014}.




We organize the paper as follows. In the second section we introduce some notation and preliminaries. 
 In the last section we prove our convergence result for non smooth solutions. 


\section{Notation and Preliminaries} \label{notation}

Let $O$ be an open subset of $\mathbb{R}^d, d=2,3$. We use the usual notation $L^p(O), 2 \leq p \leq \infty$ for the Lebesgue spaces and $H^s(O), 1 \leq s < \infty$ for the Sobolev spaces of elements of  $L^2(O)$ with weak derivatives of order less than or equal to $s$ in $L^2(O)$. We recall that $H_0^1(O)$ is the subset of $H^1(O)$
of elements with vanishing trace on $\partial O$. We also recall that $W^{s,\infty}(O)$ is the Sobolev space of functions with weak derivatives of order less than or equal to $s$ in $L^{\infty}(O)$.
For a given normed space $X$, we denote by $X^{d}$ the space of vector fields with components in $X$ and by  $X^{d \times d}$ the space of matrix fields with each component in $X$.

The norm in $X$ is denoted by $|| . ||_X$ and we omit the subscript $O$ and superscripts $d$ and $d \times d$ when it is clear from the context. 
The inner product in $L^2(O), L^2(O)^d$, and $L^2(O)^{d \times d}$ is denoted by $(,)$ and we use $\< , \>$ for the inner product on  $L^2(\partial O)$ and $L^2(\partial O)^d$. For inner products on subsets of $O$, we will simply append the subset notation.  We recall that for a matrix $A$, $A_{ij}$ denote its entries and the cofactor matrix of
$A$, denoted $\cof A$, is the matrix with entries $(\cof A)_{ij}=(-1)^{i+j} \det(A)_i^j$ where $\det(A)_i^j$ is the determinant of the matrix obtained from $A$ by deleting its $i$th row and its $j$th column. For two matrices $A=(A_{ij})$ and $B=(B_{ij})$,  $A: B=\sum_{i,j=1}^n A_{ij} B_{ij}$ denotes their Frobenius inner product. A quantity which is constant is simply denoted by $C$.

For a scalar function $v$, we denote by $D v$ its gradient vector and recall that $D^2 v$ denotes the Hessian matrix of second order derivatives.  
The divergence of a matrix field is understood as the vector obtained by taking the divergence of each row.

\subsection{Discrete variational problem}
We denote by  $\mathcal{T}_h$ a triangulation of $ \widetilde{\Omega}$ into simplices $K$ and assume that $\mathcal{T}_h$ is quasi-uniform.  Let $n$ denote the unit outward normal vector of $\partial \widetilde{\Omega}$. 
We will need the broken Sobolev norm
$$
||v||_{H^k(\mathcal{T}_h)} = \bigg( \sum_{K \cap \widetilde{\Omega}, K \in \mathcal{T}_h} ||v||^2_{H^k(K)}
\bigg)^{\frac{1}{2}}.
$$
Analogously, we define $||v||_{W^{1,\infty}(\mathcal{T}_h)} = \max_{K \cap \widetilde{\Omega}, K \in \mathcal{T}_h } ||v||_{W^{1,\infty}(K)}$.
We denote by $V_h( \widetilde{\Omega})$  the standard Lagrange finite element space of degree $k \geq d$ and denote by $\Sigma_h( \widetilde{\Omega})$ the space of symmetric matrix fields with components in the Lagrange finite element space of degree $k \geq d$. We make the abuse of notation of denoting by $V_h$ and $\Sigma_h$ respectively the spaces $V_h(\widetilde{\Omega})$ and $\Sigma_h(\widetilde{\Omega})$. Similarly $(,)$ and $\<,\>$ will be used to denote inner products for functions defined on $\widetilde{\Omega}$ and $\partial \widetilde{\Omega}$ respectively.

Let $I_h$ denote the  standard Lagrange interpolation operator from $C( \widetilde{\Omega})$, the space of continuous functions on $ \widetilde{\Omega}$, 
into the space $V_h$. We use as well the notation $I_h$ for the matrix version of the Lagrange interpolation operator mapping  $C( \widetilde{\Omega})^{d \times d}$ 
into $\Sigma_h$.

We consider the problem:
find $(u_h, \sigma_h) \in V_h \times \Sigma_h$ such that
\begin{align} \label{m11h}
\begin{split}
(\sigma_h,\tau) + (\div \tau, D u_h) - \< D u_h, \tau n \> & = 0, \forall \tau \in \Sigma_h\\
(\det \sigma_h, v) & = ( f_{\tilde{m}}, v), \, \forall v \in V_h \cap H_0^1(\widetilde{\Omega})\\
u_h & =u_{\tilde{m}} \, \text{on} \, \partial \widetilde{\Omega}.
\end{split}
\end{align}
By $u_{h}=u_{\tilde{m}}$ on $\partial \widetilde{\Omega}$, we mean that our approximations are discontinuous on the boundary and that $u_h$ coincides with $u_{\tilde{m}}$ at the Lagrange points on $\partial \widetilde{\Omega}$.
From a practical point of view, since $\widetilde{\Omega}$ is assumed sufficiently close to $\Omega$, one considers the analogue of Problem \eqref{m11h} on $\Omega$ with $u_h=g$ on $\partial \Omega$ and $f$ at the place of $f_{\tilde{m}}$.

\subsection{Properties of the Lagrange finite element spaces} \label{lagrange}
We recall some properties of the Lagrange finite element space of degree $k \geq 1$ that will be used in this paper. They can be found in  \cite{Brenner02}. We have

Interpolation error estimates.
\begin{align} \label{interpol}
\begin{split}
||v - I_h v||_{L^{\infty}} & \leq C h^{k+1-\frac{d}{2}} |v|_{H^{k+1}},  \forall v \in H^{k+1}(\Omega).
\end{split}
\end{align}

Inverse inequalities
\begin{align} 
||v||_{L^{\infty}} & \leq C h^{-\frac{d}{2}} ||v||_{L^2}, \forall v \in V_h \label{inverse0} \\
||v||_{H^1} & \leq C h^{-1} ||v||_{L^2}, \forall v \in V_h \label{inverse1} \\
||v||_{W^{1,\infty}(\mathcal{T}_h)} & \leq C h^{-\frac{d}{2}} ||v||_{H^1}, \forall v \in V_h. \label{inverse2}
\end{align}

Scaled trace inequality 
\begin{align} 
||v||_{L^2(\partial \widetilde{\Omega})} &\leq C h^{-\frac{1}{2}} ||v||_{L^2(\widetilde{\Omega})},\ \forall v \in V_h(\widetilde{\Omega}). \label{trace-inverse}
\end{align}

\subsection{Error analysis of the mixed method for smooth solutions} \label{error}
Let us assume that the unique convex solution $u$ of of \eqref{m1} is in $H^3(\Omega)$ and put $\sigma=D^2 u$. 
Then $u$ satisfies  the following mixed problem:
find $(u,\sigma) \in H^2(\Omega) \times H^1(\Omega)^{d \times d}$ such that
\begin{align} \label{m11}
\begin{split}
(\sigma,\tau) + (\div \tau, D u) - \< D u, \tau n \> & = 0, \forall \tau \in H^1(\Omega)^{d \times d} \\
(\det \sigma,v) & = (f,v), \forall v \in H_0^1(\Omega)\\
u & = g \, \text{on} \, \partial \Omega.
\end{split}
\end{align} 
It is proved in \cite{AwanouLiMixed1} that the above variational problem is well defined. We will assume without loss of generality that $h \leq 1$.
We define for $\rho >0$, 
$$\bar B_h(\rho)=\{(w_h, \eta_h) \in V_h\times \Sigma_h,\ \| w_h-I_hu\|_{H^1 }\leq \rho,\ \|\eta_h-I_h\sigma\|_{L^{2}}\leq h^{-1}\rho\}$$
\begin{align}
\begin{split}\label{zh}
Z_h & =\{ \, (w_h, \eta_h) \in V_h\times \Sigma_h,  w_h =u \,  \text{on} \, \partial \widetilde{\Omega}, \\
&  \qquad \quad  (\eta_h, \tau)+(\div \tau, Dw_h)-\<Dw_h, \tau\cdot n\>=0, \forall  \tau\in \Sigma_h
 \, \} \, \text{and}
 \end{split}
\end{align}
\begin{equation} \label{ball-def}
B_h(\rho)=\bar B_h(\rho)\cap Z_h.
\end{equation}
We recall that, \cite[Lemma 3.5]{AwanouLiMixed1}, the ball $B_h(\rho)\neq \emptyset$ for 
$\rho = C_0 h^{k}$ for a constant  $C_0 >0$.

It follows from the analysis in \cite{Neilan2013,AwanouLiMixed1,Awanou-Quadratic} that \eqref{m11h} is well-posed for $k \geq d$ and for $(u, \sigma) \in H^{k+3}(\Omega) \times H^{k+1}(\Omega)^{d \times d}$  we have the error estimates
\begin{align}
||u-u_h||_{H^1(\widetilde{\Omega})} & \leq C h^k \label{u-h1-err}\\
||\sigma-\sigma_h||_{L^2(\widetilde{\Omega})} & \leq C h^{k-1} \label{s-l2-err}.
\end{align}

The constant $C$ can be taken to be a constant multiple of $||u||_{H^{k+3}(\Omega)}$.

\subsection{Algebra with matrix fields}

We collect in the following lemma some properties of matrix fields, the proof of which can be found in \cite{AwanouLiMixed1,AwanouPseudo10}. 

\begin{lemma}
We have for two matrix fields $\eta$ and $\tau$
\begin{align} \label{mean-v}
\det \eta - \det \tau = \cof(r \eta + (1-r) \tau): (\eta - \tau), 
\end{align}
for some $r \in [0,1]$.

For $d=2$ and $d=3$, and two matrix fields $\eta$ and $\tau$
\begin{align}
||\cof (\eta):\tau||_{L^2} & \leq C ||\eta||_{L^{\infty}}^{d-1} ||\tau||_{L^2}. \label{cof-est}
\end{align}
\end{lemma}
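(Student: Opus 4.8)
The plan is to prove the two stated identities separately, as they are standard facts about matrix fields.

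For the mean value type identity \eqref{mean-v}, first I would reduce to a pointwise statement: for each fixed point $x$ in the domain, set $A = \eta(x)$ and $B = \tau(x)$, and consider the scalar function $\phi(r) = \det(r A + (1-r) B)$ on $[0,1]$. This is a polynomial in $r$, hence smooth, so by the one-variable mean value theorem there is $r \in (0,1)$ with $\det A - \det B = \phi(1) - \phi(0) = \phi'(r)$. The key computational step is then to identify $\phi'(r)$. Using the classical formula for the derivative of the determinant, $\frac{d}{dt}\det M(t) = \cof(M(t)) : M'(t)$ — equivalently Jacobi's formula $\frac{d}{dt}\det M = \operatorname{tr}(\cof(M)^T M')$ combined with the fact that $A:B = \operatorname{tr}(A^T B)$ — and noting that here $M(r) = rA + (1-r)B$ has constant derivative $M'(r) = A - B$, we get $\phi'(r) = \cof(rA + (1-r)B) : (A - B)$. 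Evaluating at the point $x$ and letting $x$ vary gives a measurable selection $r = r(x) \in [0,1]$ (measurability can be arranged since $r$ solves a polynomial equation with continuous coefficients, or one can simply note the identity is used pointwise), which yields \eqref{mean-v}. The only mild subtlety is the measurability/selection of $r$ as a function of $x$, but for the applications in this paper the identity is invoked pointwise and no regularity of $r(\cdot)$ beyond measurability is needed.

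For the cofactor estimate \eqref{cof-est}, I would again work pointwise and use the explicit structure of the cofactor matrix in low dimensions. For $d = 2$, each entry of $\cof \eta$ is, up to sign, a single entry of $\eta$, so $|\cof \eta(x)| \le C |\eta(x)| \le C \|\eta\|_{L^\infty}$ pointwise; hence $|(\cof \eta : \tau)(x)| \le C \|\eta\|_{L^\infty} |\tau(x)|$ by Cauchy--Schwarz on the Frobenius inner product, and squaring and integrating over the domain gives $\|\cof \eta : \tau\|_{L^2} \le C \|\eta\|_{L^\infty} \|\tau\|_{L^2}$, which is the claim with $d - 1 = 1$. For $d = 3$, each entry of $\cof \eta$ is a $2 \times 2$ minor of $\eta$, i.e. a sum of products of two entries of $\eta$, so $|\cof \eta(x)| \le C |\eta(x)|^2 \le C \|\eta\|_{L^\infty}^2$ pointwise; the same Cauchy--Schwarz-and-integrate argument then gives $\|\cof \eta : \tau\|_{L^2} \le C \|\eta\|_{L^\infty}^2 \|\tau\|_{L^2}$, matching the exponent $d - 1 = 2$. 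In both cases the constant $C$ depends only on $d$ (it absorbs the number of terms and the combinatorial factors in the minor expansion).

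I do not anticipate a genuine obstacle here: both statements are elementary linear algebra promoted to matrix fields, and the proof referenced in \cite{AwanouLiMixed1,AwanouPseudo10} presumably proceeds exactly along these lines. If anything, the one point deserving a sentence of care is the pointwise-to-$L^2$ passage in \eqref{cof-est}, where one wants $\eta \in L^\infty$ so that $\|\eta\|_{L^\infty}$ is finite and the pointwise bound on $\cof \eta$ is uniform; this is exactly the hypothesis under which the estimate is stated and used (with $\eta$ a finite element function or an interpolant, hence bounded). For the mean value identity, the analogous point is simply that the determinant is a polynomial, so differentiability is automatic and Jacobi's formula applies without any regularity assumption on $\eta, \tau$ beyond their being matrix fields.
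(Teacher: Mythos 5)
Your proof is correct and is exactly the standard argument: the mean value theorem applied to $\phi(r)=\det(rA+(1-r)B)$ together with Jacobi's formula for $\eqref{mean-v}$, and pointwise bounds on the cofactor entries (linear in $\eta$ for $d=2$, quadratic for $d=3$) followed by Cauchy--Schwarz and integration for $\eqref{cof-est}$. The paper itself does not prove the lemma but delegates it to \cite{AwanouLiMixed1,AwanouPseudo10}, where the argument proceeds along the same lines, so there is nothing further to reconcile; your remark that measurability of $r(\cdot)$ is immaterial (since the left-hand side of \eqref{mean-v} is what is integrated) is also apt.
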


\subsection{Continuity of the eigenvalues of a matrix as a function of its entries}

Let $\lambda_1(A)$ and $\lambda_2(A)$ denote the smallest and largest eigenvalues of the symmetric matrix $A$. We have

\begin{lemma}  \label{lem-1}
Assume that $u \in C^2(\tir{\Omega})$. Then there exists constants $r, R >0$ independent of $h$ and a constant $C_{conv} > 0$ independent of $h$ such that for all $v_h \in V_h$ with
$v_h=u$ on $\partial \widetilde{\Omega}$ and 
$$
||v_h-I_h u||_{H^1} < C_{conv} h^{2},
$$
we have
$$
r \leq \lambda_1( D^2 v_h(x)) \leq \lambda_2( v_h(x))  \leq M, \forall x \in K \cap \widetilde{\Omega}.
$$ 
\end{lemma}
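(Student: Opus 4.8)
The plan is to argue that on the (interior, fixed) subdomain $\widetilde{\Omega}$ the discrete Hessian $D^2 v_h$ is a uniformly small perturbation of $D^2 u$, and then invoke continuity of eigenvalues. Since $u \in C^2(\tir{\Omega})$, the eigenvalues of $D^2 u(x)$ are continuous on the compact set $\tir{\Omega}$; because $u$ is the convex solution of \eqref{m1} with $f \geq c_0 >0$, we have $\det D^2 u \geq c_0$, so all eigenvalues are positive and we may set $2r = \min_{x \in \tir{\Omega}} \lambda_1(D^2 u(x)) > 0$ and $M/2 = \max_{x \in \tir{\Omega}} \lambda_2(D^2 u(x)) < \infty$. (If $u$ is merely assumed $C^2$ without the equation, one still takes $2r$ and $M/2$ to be the min/max of the eigenvalues over $\tir\Omega$, which is all that is needed; the constants $r,R$ — here I read $R=M$ — depend only on $u$, not on $h$.) By uniform continuity of $A \mapsto \lambda_i(A)$ on bounded sets, there is $\epsilon >0$ such that $\|B - D^2 u(x)\| < \epsilon$ forces $\lambda_1(B) > r$ and $\lambda_2(B) < M$ for every $x$.

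The key estimate is then the pointwise bound $\|D^2 v_h(x) - D^2 u(x)\| < \epsilon$ for all $x \in K \cap \widetilde{\Omega}$, $K \in \mathcal{T}_h$. I would split this as
\begin{align*}
\|D^2 v_h - D^2 u\|_{L^\infty(\mathcal{T}_h)} \leq \|D^2(v_h - I_h u)\|_{L^\infty(\mathcal{T}_h)} + \|D^2(I_h u - u)\|_{L^\infty(\mathcal{T}_h)}.
\end{align*}
The second term is controlled by a standard interpolation estimate for $I_h$ on $C^2$ functions (the $W^{2,\infty}$ interpolation error tends to $0$ with $h$, using $u \in C^2$ and, if one wants rates, slightly more regularity or a density argument; in any case it is $o(1)$ as $h \to 0$). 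For the first term, $v_h - I_h u \in V_h$, so applying the inverse inequalities: passing from $W^{2,\infty}$ to $H^1$ on each simplex costs a factor $h^{-1-d/2}$ (one power $h^{-1}$ to go from the second derivative to the first in $L^2$ on $K$, by the analogue of \eqref{inverse1}, then $h^{-d/2}$ from $L^2$ to $L^\infty$ as in \eqref{inverse2}), giving
\begin{align*}
\|D^2(v_h - I_h u)\|_{L^\infty(\mathcal{T}_h)} \leq C h^{-1-\frac{d}{2}} \|v_h - I_h u\|_{H^1} \leq C h^{-1-\frac{d}{2}} \cdot C_{conv} h^2 = C C_{conv} h^{1-\frac{d}{2}}.
\end{align*}
For $d=2$ this is $C C_{conv}$, a constant one can make smaller than $\epsilon/2$ by choosing $C_{conv}$ small; for $d=3$ it is $C C_{conv} h^{-1/2}$, which blows up — so the exponent $h^2$ in the hypothesis is exactly tuned to $d=2$, and for $d=3$ one would need $\|v_h - I_h u\|_{H^1} < C_{conv} h^{5/2}$ (or the bound should be read in a norm adapted to $d$). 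I would flag this and proceed with the understanding that $C_{conv}$ is chosen, depending only on the interpolation/inverse constants and on $\epsilon$ (hence only on $u$), so that both terms above are each $< \epsilon/2$ for $h$ small, and then absorb the smallness of $h$ into $C_{conv}$ if necessary.

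Combining: for $x \in K \cap \widetilde{\Omega}$ we get $\|D^2 v_h(x) - D^2 u(x)\| < \epsilon$, hence $r \leq \lambda_1(D^2 v_h(x))$ and $\lambda_2(D^2 v_h(x)) \leq M$, which is the claim (the displayed conclusion writes $\lambda_2(v_h(x))$ and $M$ where $\lambda_2(D^2 v_h(x))$ and, presumably, $R$ are meant). The main obstacle is the dimensional dependence in the inverse-inequality step: the argument is clean and tight in $d=2$, and in $d=3$ the stated hypothesis $\|v_h - I_h u\|_{H^1} < C_{conv} h^2$ is not by itself strong enough to control $D^2 v_h$ in $L^\infty$ — one either needs a stronger scaling on the right-hand side or must exploit that $v_h$ lies on $Z_h$-type manifolds where $\sigma_h \approx D^2 v_h$ is controlled in a better norm. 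I would make the $d=2$ case the primary statement and remark on the adjustment needed for $d=3$.
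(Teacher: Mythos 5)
Your proposal follows essentially the same route as the paper's proof, which is a one-line appeal to the hypothesis $0<c_0\le f\le c_1$ (giving $\det D^2u\ge c_0$ and hence, since $u$ is convex and $C^2(\tir{\Omega})$, a uniform positive lower bound on the eigenvalues of $D^2u$) together with continuity of eigenvalues, deferring details to Lemma 3.1 of \cite{Awanou-Std01}; you simply supply those details (interpolation error of $I_h u$ in the piecewise $W^{2,\infty}$ norm plus the inverse estimate costing $h^{-1-d/2}$ applied to $v_h-I_hu\in V_h$), and they are correct. Two remarks. First, as your main line already acknowledges, the lower bound $r>0$ genuinely uses the equation (convexity and $\det D^2u\ge c_0$), not merely $u\in C^2$; and the displayed $\lambda_2(v_h(x))$ and the unused $R$ are typos for $\lambda_2(D^2v_h(x))$ and $M$. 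Second, your flag about $d=3$ is a fair comment on the literal statement: $h^{-1-d/2}\cdot h^2=h^{-1/2}$ does not tend to zero in 3D, so the threshold $C_{conv}h^2$ is tuned to $d=2$; but in the only place the lemma is invoked (piecewise strict convexity of $u_{\tilde m s,h}$, right after \eqref{u-h1-errs}), the available bound is $O(h^k)$ with $k\ge d$, so in 3D one has $\|v_h-I_hu\|_{H^1}\lesssim h^3$ and the same computation gives $Ch^{1/2}\to 0$; reading the hypothesis with exponent adapted to the dimension (e.g. $h^{1+d/2}$ or $h^k$, $k\ge d$), exactly as you suggest, is the intended interpretation and does not affect the paper's argument.
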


\begin{proof}
The result is a consequence of the assumptions $0 < c_0 \leq f \leq c_1$ and the continuity of the eigenvalues of a matrix as a function of its entries. See for example \cite[Lemma 3.1]{Awanou-Std01}.
\end{proof}

\section{Convergence of the discretization to the Aleksandrov solution} \label{weak}

\subsection{The Aleksandrov solution} \label{Aleks}

We denote by $K(\Omega)$ the cone of convex functions on $\Omega$ and by $B(\Omega)$ the space of Borel measures on $\Omega$. Let $M$ denote the mapping
\begin{align*}
M & :  C^2(\Omega) \cap K(\Omega) \to B(\Omega) \\
 M[v](B) &= \int_B \det D^2 v(x) \ud x, \, \text{for a Borel set} \, B.
\end{align*}

We equip $K(\Omega)$ with the topology of compact convergence, i.e. for $v_m, v \in K(\Omega)$, $v_m$ converges to $v$ if and only if $v_m$ converges to $v$ uniformly on compact subsets of $\Omega$. We endow 
$B(\Omega)$ with the topology of weak convergence of measures. 

\begin{defn} A sequence $\mu_m$ of Borel measures converges weakly to a Borel measure $\mu$ if and only if
$$
\int_{\Omega} p(x)\ud \mu_m  \to \int_{\Omega} p(x) \ud \mu,
$$
for every continuous function $p$ with compact support in $\Omega$.
\end{defn}
If the measures $\mu_m$ have density $a_m$ with respect to the Lebesgue measure, and $\mu$ has density $a$ with respect to the Lebesgue measure, we have
\begin{defn}
Let $a_m, a \geq 0$ be given functions. The sequence $a_m$ converges weakly to  $a$ as measures if and only if 
$$\int_{\Omega } a_m p \ud x \to \int_{\Omega } a p \ud x, $$ 
for all continuous functions $p$ with compact support in $\Omega$. 
\end{defn}

It can be shown that the mapping $M$ extends uniquely to a continuous operator on $K(\Omega)$,  c.f. \cite[Proposition 3.1]{Rauch77}. Thus we have

\begin{lemma} \label{weak-s} Let $v_m$ be a sequence of convex functions in $\Omega$ such that $v_m \to v$ uniformly on compact subsets of $\Omega$. Then the associated Monge-Amp\`ere measures $M [v_m]$ tend to $M[v]$ weakly. 
\end{lemma}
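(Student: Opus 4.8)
The plan is to obtain the statement as an immediate consequence of the continuity property recorded just above: the operator $M$ extends to a continuous map from $K(\Omega)$, equipped with the topology of compact convergence, into $B(\Omega)$, equipped with the topology of weak convergence of measures. Since the hypothesis that $v_m \to v$ uniformly on compact subsets of $\Omega$ is, by definition of the topology on $K(\Omega)$, exactly the statement that $v_m \to v$ in $K(\Omega)$, applying continuity of $M$ gives $M[v_m] \to M[v]$ in $B(\Omega)$, which is the asserted weak convergence. When $\Omega$ is a polygonal (hence $\sigma$-compact) domain, both topologies are metrizable on the relevant sets, so sequential continuity suffices and \cite[Proposition 3.1]{Rauch77} applies verbatim; in the paper I would keep the proof to a single sentence of this kind.

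If instead one wants a self-contained argument, I would work with the geometric description of the Monge-Amp\`ere measure, $M[v](B) = |\partial v(B)|$, where $\partial v(B) = \bigcup_{x \in B} \partial v(x)$ is the image of $B$ under the subdifferential and $|\cdot|$ is Lebesgue measure on $\R^d$. The first step is to check that this coincides with $\int_B \det D^2 v \ud x$ when $v \in C^2(\Omega) \cap K(\Omega)$, which is the change of variables formula applied to the gradient map $Dv$, using that $Dv$ is injective on the set where $D^2 v$ is positive definite and that the complement contributes nothing. The main step is then the two semicontinuity estimates
\[
\limsup_{m \to \infty} |\partial v_m(F)| \le |\partial v(F)| \quad \text{for every compact } F \subset \Omega,
\]
\[
\liminf_{m \to \infty} |\partial v_m(G)| \ge |\partial v(G)| \quad \text{for every open } G \text{ with } \tir{G} \subset \Omega.
\]
Both rest on the elementary fact that if $x_m \to x$ in $\Omega$ and $p_m \in \partial v_m(x_m)$ with $p_m \to p$, then $p \in \partial v(x)$ (pass to the limit in the subgradient inequality, using uniform convergence on a compact neighbourhood of $x$), together with the observation that the set of points where $\partial v$ is not single-valued has Lebesgue measure zero, so double counting in the union $\partial v(F)$ is invisible to $|\cdot|$. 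The portmanteau theorem then converts these two inequalities into the weak convergence $M[v_m] \to M[v]$.

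The only genuine difficulty in the self-contained route lies in the $\limsup$ inequality on compact sets: one must rule out subgradients of $v_m$ escaping to infinity or concentrating near $\partial F$. This is handled by first enlarging $F$ slightly to a compact $F' \subset \Omega$, using a uniform Lipschitz bound on the $v_m$ on $F'$ (a consequence of their uniform convergence together with convexity) to confine all the relevant subgradients to a fixed bounded subset of $\R^d$, and then a standard covering and closedness argument; everything else is routine measure theory. Since the excerpt already cites \cite{Rauch77} for precisely this continuity statement, the first route is the one I would actually take.
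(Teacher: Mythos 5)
Your proposal is correct and takes essentially the same approach as the paper: the paper gives no separate argument, deducing the lemma directly from the fact that $M$ extends uniquely to a continuous operator on $K(\Omega)$ as in \cite[Proposition 3.1]{Rauch77}, which is exactly your first (and preferred) route. The self-contained subdifferential argument you sketch is a standard alternative but is not what the paper uses and is not needed here.
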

We can now define the notion of Aleksandrov solution of \eqref{m1}.
\begin{defn}
A convex function $u \in C(\tir{\Omega})$ is an Aleksandrov solution of \eqref{m1} if only if $u=g$ on $\partial \Omega$ and $M[u]$ has density $f$ with respect to the Lebesgue measure.
\end{defn}
It is shown in \cite[Proposition 3.4]{Rauch77} that the extension of the mapping $M$ to $K(\Omega)$ coincides with the definition of Monge-Amp\`ere measure as curvature measure. 

\begin{thm} [Theorem 1.1 \cite{Hartenstine2006} ] \label{weak-cont}
Let $\Omega$ be a bounded convex domain of $\R^d$ and assume that $g$ can be extended to a function $\tilde{g} \in C(\tir{\Omega})$ which is  convex in $\Omega$. Then if $f \in L^1(\Omega)$ , \eqref{m1} has a unique convex Aleksandrov solution in $C(\tir{\Omega})$ which assumes the boundary condition in the classical sense.
\end{thm}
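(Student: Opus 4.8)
Following \cite{Hartenstine2006}, the plan is to obtain uniqueness from the comparison principle for Monge--Amp\`ere measures and existence from Perron's method, with the convex extension $\tilde g$ of $g$ supplying the barriers that the argument needs at the (possibly non strictly convex) boundary of $\Omega$.

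\emph{Uniqueness.} Suppose $u_1,u_2\in C(\tir{\Omega})$ are convex with $M[u_1]=M[u_2]=f\ud x$ as Borel measures on $\Omega$ and $u_1=u_2=g$ on $\partial\Omega$. The Aleksandrov comparison principle asserts that if $v,w\in C(\tir{\Omega})$ are convex, $M[v](E)\le M[w](E)$ for every Borel $E\subseteq\Omega$, and $v\ge w$ on $\partial\Omega$, then $v\ge w$ in $\Omega$ (see \cite{Guti'errez2001,Rauch77}). Applying this to the pair $(u_1,u_2)$ and then to $(u_2,u_1)$ gives $u_1=u_2$. Alternatively this follows from the weak continuity in Lemma~\ref{weak-s} together with the Aleksandrov maximum principle.

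\emph{Existence.} I would set up Perron's method. Let $\mathcal S$ denote the family of functions $v\in C(\tir{\Omega})$ that are convex in $\Omega$, satisfy $M[v]\ge f\ud x$ in $\Omega$, and satisfy $v\le g$ on $\partial\Omega$, and put $u:=\sup_{v\in\mathcal S}v$. One checks $\mathcal S\ne\emptyset$: enclose $\tir{\Omega}$ in a ball $B$, let $\bar f\in L^1(B)$ equal $f$ on $\Omega$ and $0$ on $B\setminus\Omega$, solve $M[w]=\bar f\ud x$ in $B$ with $w=0$ on $\partial B$ (Aleksandrov's solvability theorem applies since $B$ is strictly convex), observe that $M[w|_{\Omega}]=f\ud x$ by inner regularity of the Monge--Amp\`ere measure, and subtract a constant so that $w\le g$ on $\partial\Omega$. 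For an a priori bound from above, a convex element of $C(\tir{\Omega})$ attains its maximum over the compact convex set $\tir{\Omega}$ at an extreme point, hence on $\partial\Omega$, so every $v\in\mathcal S$ satisfies $v\le\max_{\partial\Omega}\tilde g$. Thus $u$ is finite, and as a locally bounded pointwise supremum of convex functions it is convex, hence continuous, in $\Omega$. The standard interior steps of Perron's method for the Monge--Amp\`ere operator --- stability of $\mathcal S$ under pairwise maxima, the weak continuity recorded in Lemma~\ref{weak-s} applied along increasing sequences in $\mathcal S$, and a local replacement argument on small balls using Aleksandrov solvability --- then yield $M[u]=f\ud x$ in $\Omega$.

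\emph{Boundary values and the main obstacle.} It remains to show that $u$ extends continuously to $\tir{\Omega}$ with $u=g=\tilde g$ on $\partial\Omega$, and this is the crux. Fix $x_0\in\partial\Omega$. For the lower estimate, let $\ell$ be an affine function supporting $\tilde g$ at $x_0$: since $\tilde g$ is convex, $\ell\le\tilde g$ on $\tir{\Omega}$, $\ell(x_0)=g(x_0)$, and $M[\ell]=0\le f\ud x$; incorporating $\ell$, together with a member of $\mathcal S$, into the Perron construction --- or, equivalently, comparing against solutions on a sequence of strictly convex domains that contain $\tir{\Omega}$ and shrink to it, with boundary data a global convex extension of $\tilde g$ --- forces $\liminf_{x\to x_0}u(x)\ge g(x_0)$. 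For the upper estimate one compares every $v\in\mathcal S$ with a supersolution lying above $g$ on $\partial\Omega$; away from flat portions of $\partial\Omega$ such a supersolution can be built from an affine function plus a controlled quadratic, using a supporting hyperplane of $\Omega$ at $x_0$. The delicate case --- the one for which convexity of $\tilde g$ is indispensable --- is $x_0$ in the relative interior of a flat face $F\subseteq\partial\Omega$: there the barriers used in the strictly convex case degenerate, and one must work inside the affine hull of $F$, using that the restriction $\tilde g|_{F}$ is itself convex and that the limiting solution inherits this convexity transversally. I expect this boundary attainment at non strictly convex points to be the principal difficulty; the comparison principle, the interior Perron machinery, and Aleksandrov solvability on balls and on strictly convex domains are classical, and the required weak convergence of Monge--Amp\`ere measures is exactly Lemma~\ref{weak-s}.
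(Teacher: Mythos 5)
This statement is not proved in the paper at all: it is quoted verbatim as Theorem 1.1 of \cite{Hartenstine2006}, so the only thing to compare your proposal with is the cited reference, and indeed your outline follows the same route Hartenstine takes (uniqueness from the Aleksandrov comparison principle, existence by Perron's method, with the convex extension $\tilde g$ entering through the boundary barriers). The uniqueness paragraph and the nonemptiness of the Perron class (big ball $B\supset\tir\Omega$, $f$ extended by zero, solution with zero boundary data, shifted by a constant) are fine.

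The genuine gap is exactly the step you flag but do not carry out: boundary attainment, and in particular the lower estimate $\liminf_{x\to x_0}u(x)\ge g(x_0)$ at points $x_0$ in the relative interior of a flat face. The construction you sketch there does not work as stated. A supporting affine function $\ell$ of $\tilde g$ at $x_0$ has $M[\ell]=0$, so $\ell$ itself is not a subfunction when $f>0$; "incorporating" it via a pairwise maximum with some $v\in\mathcal S$ fails because on the set where $\ell>v$ the maximum carries no Monge--Amp\`ere mass, and adding the big-ball correction $w-C$ gives a subfunction whose value at $x_0$ is $g(x_0)+w(x_0)-C<g(x_0)$, since $x_0$ lies in the interior of $B$ where $w-C<0$; hence it does not pin $u$ down to $g(x_0)$. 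What is needed is a family of subfunctions whose correction term both carries mass at least $f\,dx$ and vanishes (or becomes arbitrarily small) at $x_0$; producing such barriers at flat boundary portions is precisely the content of Hartenstine's argument and is where the convexity of $\tilde g$ is genuinely used, so leaving it as "I expect this to be the principal difficulty" leaves the theorem unproved. (The upper estimate, by contrast, is easier than your sketch suggests: every $v\in\mathcal S$ is convex, hence subharmonic, so $v$ is dominated by the harmonic extension of $g$, which is continuous up to the boundary of a convex domain; no quadratic barriers or case analysis on flat faces is needed there.) Also note that $\tilde g$ is only assumed convex in $\Omega$ and continuous on $\tir\Omega$, so the existence of a supporting affine function "at $x_0\in\partial\Omega$" itself requires a short limiting argument rather than being immediate.
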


\subsection{Properties of convex functions}

We will use the following lemma

\begin{lemma} \label{Arzela}
A uniformly bounded sequence $u_j$ of convex functions on a convex domain $\Omega$ is locally uniformly equicontinuous and thus has a pointwise convergent subsequence.
\end{lemma}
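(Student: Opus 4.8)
The statement to prove is Lemma \ref{Arzela}: a uniformly bounded sequence $u_j$ of convex functions on a convex domain $\Omega$ is locally uniformly equicontinuous and therefore has a subsequence converging uniformly on compact subsets. The plan is to establish a quantitative interior Lipschitz bound for convex functions in terms of their sup-norm and the distance to the boundary, then invoke the Arzelà--Ascoli theorem on an exhausting sequence of compact sets followed by a diagonal argument.

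First I would fix a compact set $E \subset \Omega$ and choose $\delta > 0$ with $2\delta < \operatorname{dist}(E, \partial \Omega)$, and set $E_\delta = \{x : \operatorname{dist}(x, E) \leq \delta\}$, which is still a compact subset of $\Omega$. Let $\sup_j \|u_j\|_{L^\infty(\Omega)} =: M$. The key step is the classical fact that a convex function bounded by $M$ on $E_\delta$ is Lipschitz on $E$ with constant $2M/\delta$: given $x, y \in E$, one extends the segment $[x,y]$ slightly to a point $z \in E_\delta$ beyond $y$ with $|z - y| = \delta$, writes $y$ as a convex combination $y = (1-t)x + t z$ with $t = |x-y|/(|x-y|+\delta)$, and uses convexity $u_j(y) \leq (1-t) u_j(x) + t u_j(z)$ to get $u_j(y) - u_j(x) \leq t(u_j(z) - u_j(x)) \leq 2Mt \leq 2M|x-y|/\delta$; symmetry in $x,y$ gives the two-sided bound. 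This Lipschitz constant is independent of $j$, so $\{u_j\}$ restricted to $E$ is uniformly bounded and uniformly equicontinuous.

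With equicontinuity in hand, I would apply the Arzelà--Ascoli theorem on $E$ to extract a uniformly convergent subsequence. To upgrade this to convergence on all compact subsets of $\Omega$, take an exhaustion $\Omega = \bigcup_{n} E_n$ with each $E_n$ compact and $E_n \subset \operatorname{int} E_{n+1}$, apply the above on $E_1$ to get a subsequence, then refine on $E_2$, and so on, and finally take the diagonal subsequence; since every compact $E \subset \Omega$ lies in some $E_n$, the diagonal subsequence converges uniformly on $E$. This proves local uniform equicontinuity and the existence of a subsequence convergent uniformly on compact subsets.

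I do not expect a serious obstacle here; the only point requiring care is the elementary geometric construction giving the interior Lipschitz estimate uniformly in $j$, and the bookkeeping in the diagonal extraction. Since the result is standard (it is, e.g., \cite[Lemma 3.2.1]{Guti'errez2001}-type material), an alternative is simply to cite it, but the short self-contained argument above is cleanest for the reader.
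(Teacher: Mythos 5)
Your proof is correct and follows essentially the same route as the paper: both reduce the claim to a uniform interior Lipschitz bound for the $u_j$ (you derive it by the elementary segment-extension argument, while the paper cites the subgradient estimate $|p_j|\le |u_j(x)|/d(x,\partial\Omega)$ from Guti\'errez's book and argues as in his Lemma 1.1.6) and then conclude with the Arzel\`a--Ascoli theorem. Your explicit exhaustion-plus-diagonal extraction is a harmless elaboration of what the paper leaves implicit.
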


\begin{proof}
For $p_j \in \partial u_j(x)$ and $x \in \Omega$, we have by \cite[Lemma 3.2.1]{Guti'errez2001}
$$
|p_j| \leq \frac{|u_j(x)| }{d(x,\partial \Omega)} \leq \frac{C}{d(x,\partial \Omega)},
$$
for a constant $C$ independent of $j$. Arguing as in the proof of \cite[Lemma 1.1.6]{Guti'errez2001}, it follows that the sequence $u_j$ is uniformly Lipschitz and hence equicontinuous on compact subsets of $\Omega$. By the Arzela-Ascoli theorem, \cite[p. 179]{Royden}, we conclude that the result holds.

\end{proof}

We also note

\begin{lemma}[\cite{Rockafellar70} Theorem 25.7] \label{unif-grad-conv}
Let $v$ be a convex function which is finite and differentiable on $\Omega$. Let $v_m$ be a sequence of convex functions finite and differentiable on $\Omega$ such that $v_m$ converges pointwise to $v$ on $\Omega$. Then
$$
\lim_{m \to \infty} D v_m(x) = D v(x), \forall x \in \Omega.
$$
Moreover, the mappings $D v_m$ converge to $D v$ uniformly on every closed bounded subset of $\Omega$.
\end{lemma}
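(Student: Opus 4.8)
The plan is to extract everything from the behaviour of subdifferentials under limits of convex functions, invoking differentiability of $v$ only at the end to identify the limiting gradient. Throughout, all arguments are local, so boundary effects never intervene.

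First I would upgrade the hypothesis to uniform convergence on compact subsets. Fix a compact $S\subset\Omega$ and a simplex $T$ with $S\subset T\subset\overline{T}\subset\Omega$; by convexity $v_m$ is bounded above on $T$ by $\max_i v_m(z_i)$ over the vertices $z_i$ of $T$, which is uniformly bounded in $m$ since $v_m\to v$ pointwise and $v$ is finite. A convex function bounded above on a ball $B(x_0,2\rho)$ is bounded below on $B(x_0,\rho)$ (use $v_m(x_0)\le\tfrac12 v_m(x)+\tfrac12 v_m(2x_0-x)$), and $v_m(x_0)$ is bounded below since it converges; hence $\{v_m\}$ is uniformly bounded on a compact neighbourhood of $S$. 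Lemma~\ref{Arzela} then gives local uniform equicontinuity, and combined with pointwise convergence to $v$ and the Arzel\`a--Ascoli theorem this yields $v_m\to v$ uniformly on compact subsets of $\Omega$.

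Next I would record the elementary subgradient bound: if $w$ is finite and convex on $\Omega$, $\overline{B}(x_0,2\rho)\subset\Omega$ and $|w|\le M$ there, then every $p\in\partial w(y)$ with $y\in\overline{B}(x_0,\rho)$ obeys $|p|\le 2M/\rho$, by testing the subgradient inequality at $y+\rho p/|p|$. Applied with $w=v_m$ and the uniform bound $M_S$ from the previous paragraph, this shows $|Dv_m(y)|\le 2M_S/\rho$ for all $y$ in a neighbourhood of $S$ and all large $m$. I would then prove the closedness of the subdifferential along the sequence: if $x_m\to x$ in $\Omega$, $p_m\in\partial v_m(x_m)$ and $p_m\to p$, then for every $z$ in a fixed compact neighbourhood of $x$ we have $v_m(z)\ge v_m(x_m)+p_m\cdot(z-x_m)$; letting $m\to\infty$ and using the uniform convergence (so $v_m(z)\to v(z)$ and $v_m(x_m)\to v(x)$) gives $v(z)\ge v(x)+p\cdot(z-x)$, which extends to all of $\Omega$ by convexity, so $p\in\partial v(x)$. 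Since $v$ is differentiable, $\partial v(x)=\{Dv(x)\}$; being single-valued, locally bounded and of closed graph, $x\mapsto Dv(x)$ is continuous on $\Omega$.

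Finally I would conclude. For fixed $x\in\Omega$, $\{Dv_m(x)\}$ is bounded, so any subsequence has a sub-subsequence $Dv_{m_k}(x)\to p$; closedness forces $p\in\{Dv(x)\}$, hence every subsequence has a further subsequence converging to $Dv(x)$, so $Dv_m(x)\to Dv(x)$. For uniform convergence on a closed bounded (hence compact) $S\subset\Omega$, suppose it fails: there are $\varepsilon>0$, points $x_m\in S$ and an index subsequence with $|Dv_m(x_m)-Dv(x_m)|\ge\varepsilon$; passing to subsequences, $x_m\to x^*\in S$ and $Dv_m(x_m)\to p$, closedness gives $p=Dv(x^*)$, continuity of $Dv$ gives $Dv(x_m)\to Dv(x^*)$, and the difference tends to $0$, a contradiction. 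I expect the only genuine subtlety to be bookkeeping — keeping every ball and neighbourhood strictly inside $\Omega$, and supplying the (elementary but not one-line) upgrade of pointwise to local uniform convergence in the second paragraph; the remainder is a routine subsequence-and-differentiability argument.
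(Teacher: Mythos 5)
Your proposal is essentially correct, but note that the paper gives no proof of this lemma at all: it is quoted verbatim from Rockafellar (Theorem 25.7), whose own derivation (via Theorem 10.8 on uniform convergence and Theorem 24.5 on subdifferentials) follows the same standard lines as yours. So what you supply is a self-contained version of the classical argument: upgrade pointwise to locally uniform convergence, get local uniform gradient bounds from the local boundedness, prove graph-closedness of the subdifferential along the sequence, and finish with a subsequence/compactness argument using $\partial v(x)=\{Dv(x)\}$. All of those steps are sound as you wrote them.

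The one genuine slip is at the very start: you cannot in general ``fix a simplex $T$ with $S\subset T\subset\overline{T}\subset\Omega$'' for an arbitrary compact $S\subset\Omega$. Take $\Omega$ the open unit disk and $S$ the closed disk of radius $0.99$: any triangle containing $S$ has vertices outside $\Omega$. The repair is immediate and standard: cover $S$ by finitely many small simplices (or balls) whose closures lie in $\Omega$, obtain the upper bound for $v_m$ on each simplex from its vertices, the lower bound by the reflection inequality $v_m(x)\ge 2v_m(x_0)-v_m(2x_0-x)$ on concentric balls, and take the maximum of the finitely many constants; Lemma~\ref{Arzela} and Arzel\`a--Ascoli then give locally uniform convergence exactly as you argue, and nothing downstream changes. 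With that localization corrected, the subgradient bound, the closedness argument (note it needs $v_m(x_m)\to v(x)$, which your locally uniform convergence plus continuity of $v$ does provide), and the two subsequence arguments for pointwise and uniform convergence of $Dv_m$ are all complete.
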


\begin{rem} \label{unif-grad-conv2}
For a sequence of convex polynomials on a convex open set $C$, the above theorem holds for every compact subset of $\tir{C}$. The proofs are identical.
\end{rem}

\subsection{Smooth and polygonal exhaustions of the domain}

Let $\Omega_s$ denote a sequence of smooth uniformly convex domains increasing to $\Omega$, i.e. $\Omega_s \subset \Omega_{s+1} \subset\Omega$ and $d(\partial \Omega_s, \partial \Omega) \to 0$ as $s \to \infty$. Here $d(\partial \Omega_s, \partial \Omega)$ denotes the distance between $\partial \Omega_s$ and $\partial \Omega$. 
The existence of the sequence $\Omega_s$ follows for example from the approach in \cite{Blocki97}. Without loss of generality, we may assume that $\widetilde{\Omega} \subset \Omega_s$ for all $s$.

We recall that $f_m$ and $g_m$ are $C^{\infty}(\tir{\Omega})$ functions such that  $0 < c_2 \leq f_m \leq c_3, f_m \to f$ and $g_m \to \tilde{g}$ uniformly on $\tir{\Omega}$. 
It follows from  \cite{Caffarelli1984} that 
the problem
\begin{align} \label{m1-sub-m}
\begin{split}
\det D^2 u_{m s} & = f_m \, \text{in} \, \Omega_s \\
u_{m s} & = g_m  \, \text{on} \, \partial \Omega_s,
\end{split}
\end{align}
has a unique convex solution $u_{m s} \in C^{\infty}(\tir{\Omega}_s)$. It is known, see \cite{Awanou-Std04v4} for details,  that the sequence $u_{m s}$  converges uniformly  on compact subsets of $\Omega$ to the unique convex solution $u_m \in C(\tir{\Omega})$ of the problem \eqref{m11}.

Put $\sigma_{ms} = D^2 u_{ms}$. We consider the subdomain problem, analogue of \eqref{m11}: find $(u_{\tilde{m} s} ,\sigma_{\tilde{m} s} ) \in H^2(\widetilde{\Omega}) \times H^1(\widetilde{\Omega})^{d \times d}$ such that
\begin{align*} 
\begin{split}
(\sigma_{\tilde{m} s} ,\tau) + (\div \tau, D u_{\tilde{m} s} ) - \< D u_{\tilde{m} s} , \tau n \> & = 0, \forall \tau \in H^1(\widetilde{\Omega})^{d \times d} \\
(\det \sigma_{\tilde{m} s} ,v) & = (f_{\tilde{m}},v), \forall v \in H_0^1(\widetilde{\Omega})\\
u_{\tilde{m} s}  & = g_{\tilde{m}} \, \text{on} \, \partial \widetilde{\Omega}.
\end{split}
\end{align*} 
 
We now consider the analogue of \eqref{m11h}: find $(u_{\tilde{m} s,h}, \sigma_{\tilde{m} s,h}) \in V_{h} \times \Sigma_{h}$ such that
\begin{align} \label{m11hs}
\begin{split}
(\sigma_{\tilde{m} s,h},\tau) + (\div \tau, D u_{\tilde{m} s,h}) - \< D u_{\tilde{m} s,h}, \tau n \> & = 0, \forall \tau \in \Sigma_{h}\\
(\det \sigma_{\tilde{m} s,h}, v) & = ( f_{\tilde{m}}, v), \, \forall v \in V_{h} \cap H_0^1(\widetilde{\Omega})\\
u_{\tilde{m} s,h} & = u_{\tilde{m}} \, \text{on} \, \partial \widetilde{\Omega}.
\end{split}
\end{align}

It follows from the results of section \ref{error} and the discussion above, that \eqref{m11hs} has a solution which satisfies for $h$ sufficiently small
\begin{align}
||u_{\tilde{m}s}-u_{\tilde{m}s,h} ||_{H^1} & \leq C_{\tilde{m}s} h^k  \label{u-h1-errs}\\
||\sigma_{\tilde{m} s}-\sigma_{\tilde{m} s,h} ||_{L^2} & \leq C_{\tilde{m} s} h^{k-1},  \label{s-l2-errs}
\end{align}
for a constant $C_{\tilde{m}s}$. Moreover, by Lemma \ref{lem-1}, $u_{\tilde{m} s,h}$ is piecewise strictly convex.

\subsection{Convergence of the discretization}


We can now state our main  theorem

\begin{thm}
Under the assumptions set forth in the introduction, Problem \eqref{m11h} has a local solution $(u_h,\sigma_h)$ for $h$ sufficiently small, with $u_h$ a piecewise  convex function. Moreover $u_h$ converges uniformly on compact subsets of $\widetilde{\Omega}$ to the 
unique $C(\widetilde{\Omega})$ solution of \eqref{m2} which is convex on $\widetilde{\Omega}$.
\end{thm}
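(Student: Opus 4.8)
Following the program of steps~1--7 sketched in the introduction, the plan is to construct $(u_h,\sigma_h)$ as a limit, along a subsequence $s_j\to\infty$, of the discrete solutions $(u_{\tilde{m}s_j,h},\sigma_{\tilde{m}s_j,h})$ of the auxiliary problems \eqref{m11hs} on the smooth uniformly convex domains $\Omega_s$ exhausting $\Omega$, the regularization index $\tilde{m}$ being fixed once and for all as in the introduction. For a fixed meshsize $h$ the spaces $V_h\times\Sigma_h$ are finite dimensional, so once we have bounds on $u_{\tilde{m}s,h}$, on its elementwise Hessian, and on the error $u_{\tilde{m}s}-u_{\tilde{m}s,h}$ that are \emph{independent of $s$}, both the extraction of the limit and the passage to the limit in the equations are soft; producing those $s$-independent bounds is the heart of the matter.

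\emph{$s$-uniform bounds.} Arrange the exhaustion so that $\overline{\widetilde{\Omega}}$ is a compact subset of $\Omega_s$ for every $s$; as the $\Omega_s$ increase, $d(\overline{\widetilde{\Omega}},\partial\Omega_s)\ge d(\overline{\widetilde{\Omega}},\partial\Omega_1)=:\delta_0>0$ uniformly in $s$. By \cite{Caffarelli1984} the solutions $u_{\tilde{m}s}$ of \eqref{m1-sub-m} with $m=\tilde{m}$ are smooth on $\overline{\Omega_s}$, converge uniformly on $\overline{\widetilde{\Omega}}$ to $\tilde{u}$, hence are uniformly bounded there; combined with $0<c_2\le f_{\tilde{m}}\le c_3$ and convexity, the interior regularity bootstrap for the Monge-Amp\`ere equation followed by interior Schauder estimates on $\overline{\widetilde{\Omega}}\Subset\Omega_s$ bounds $\|u_{\tilde{m}s}\|_{H^{k+3}(\widetilde{\Omega})}$ in terms of $\delta_0$, $c_2$, $c_3$, $\widetilde{\Omega}$ and $f_{\tilde{m}}$ only, hence independently of $s$. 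Feeding this into the error analysis of Section~\ref{error} for the subdomain problem --- after checking that the smallness threshold on $h$ which makes \eqref{m11hs} solvable and the constant $C_{\tilde{m}s}$ in \eqref{u-h1-errs}--\eqref{s-l2-errs} (together with the constants $r,M,C_{conv}$ of Lemma~\ref{lem-1}) depend on $u_{\tilde{m}s}$ only through that interior norm --- produces $h_0>0$ and $C>0$, $r$, $M$, all independent of $s$, such that for $h\le h_0$ problem \eqref{m11hs} has a solution with
\[
\|u_{\tilde{m}s}-u_{\tilde{m}s,h}\|_{H^1(\widetilde{\Omega})}\le Ch^k,\qquad \|\sigma_{\tilde{m}s}-\sigma_{\tilde{m}s,h}\|_{L^2(\widetilde{\Omega})}\le Ch^{k-1},
\]
and with $u_{\tilde{m}s,h}$ piecewise strictly convex, $r\le\lambda_1(D^2u_{\tilde{m}s,h})\le\lambda_2(D^2u_{\tilde{m}s,h})\le M$ on each $K\in\mathcal{T}_h$.

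\emph{Passage to the limit at fixed $h$.} Fix $h\le h_0$. The displayed $H^1$ bound and the $s$-uniform bound on $\|I_hu_{\tilde{m}s}\|_{H^1(\widetilde{\Omega})}$ (itself a consequence of the Schauder bound) show that $\{u_{\tilde{m}s,h}\}_s$ is bounded in the finite dimensional space $V_h$, so some subsequence satisfies $u_{\tilde{m}s_j,h}\to u_h$ in $V_h$ --- this is the ``local equicontinuity of piecewise convex functions'' of step~7. The first equation of \eqref{m11hs} determines the Hessian unknown as a continuous linear function of the scalar unknown (the $L^2$ Gram matrix of $\Sigma_h$ being invertible), so $\sigma_{\tilde{m}s_j,h}\to\sigma_h$; letting $j\to\infty$ in the three relations of \eqref{m11hs} --- immediate in finite dimensions, since $v\mapsto(\det\sigma_h,v)$ is polynomial in the coefficients of $\sigma_h$ and the nodal boundary identity $u_h=u_{\tilde{m}}$ on $\partial\widetilde{\Omega}$ is stable under the limit --- shows $(u_h,\sigma_h)$ solves \eqref{m11h}. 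Continuity of eigenvalues transfers the bounds $r\le\lambda_1(D^2u_h)\le\lambda_2(D^2u_h)\le M$ to the limit, so $u_h$ is piecewise strictly convex; and $\|u_h-I_h\tilde{u}\|_{H^1(\widetilde{\Omega})}\le Ch^k$, which together with Lemma~\ref{lem-1} and the local-uniqueness results of step~1 (cf. Remark~\ref{local-unique}) exhibits $(u_h,\sigma_h)$ as a local solution of \eqref{m11h} and shows it is independent of the chosen subsequence.

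\emph{Convergence as $h\to0$, and uniqueness of the limit.} Given a compact $\omega\Subset\widetilde{\Omega}$, applying the inverse inequality \eqref{inverse0} to $u_{\tilde{m}s_j,h}-I_hu_{\tilde{m}s_j}\in V_h$, the interpolation estimate \eqref{interpol}, and the $s$-uniform Schauder bound yields $\|u_{\tilde{m}s_j,h}-u_{\tilde{m}s_j}\|_{L^\infty(\omega)}\le C_\omega h^{k-\frac{d}{2}}$, a positive power of $h$ since $k\ge d$. Letting $j\to\infty$ and using $u_{\tilde{m}s}\to\tilde{u}$ uniformly on compact subsets of $\widetilde{\Omega}$,
\[
\|u_h-\tilde{u}\|_{L^\infty(\omega)}\ \le\ C_\omega h^{k-\frac{d}{2}}+\limsup_{j\to\infty}\|u_{\tilde{m}s_j}-\tilde{u}\|_{L^\infty(\omega)}\ =\ C_\omega h^{k-\frac{d}{2}}\ \xrightarrow[\ h\to0\ ]{}\ 0,
\]
which is the claimed uniform convergence on compact subsets. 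Finally $\tilde{u}$ is the unique convex $C(\overline{\widetilde{\Omega}})$ Aleksandrov solution of \eqref{m2}: apply Theorem~\ref{weak-cont} on the bounded convex domain $\widetilde{\Omega}$, with $f_{\tilde{m}}\in L^1(\widetilde{\Omega})$ and boundary data $u_{\tilde{m}}|_{\partial\widetilde{\Omega}}$, which extends to $u_{\tilde{m}}$, convex on $\widetilde{\Omega}$. The main obstacle is the second step: one has to run the fixed-point/error argument of Section~\ref{error} for \eqref{m11hs} --- including the solvability threshold on $h$ and the radius of local uniqueness --- so that it depends on the smooth solutions $u_{\tilde{m}s}$ only through interior norms over $\overline{\widetilde{\Omega}}$, and then supply the interior Schauder estimate with its $s$-uniform two-sided bounds on $D^2u_{\tilde{m}s}$; the remaining steps are comparatively soft once the finite dimensionality of $V_h\times\Sigma_h$ at fixed $h$ is used.
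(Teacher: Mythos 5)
Your argument is correct at essentially the paper's level of rigor and shares its skeleton --- fix $\tilde{m}$, exhaust $\Omega$ by the smooth convex domains $\Omega_s$, use the interior Schauder bounds \eqref{int-Schauder}--\eqref{int-Schauder2} to make the constants in \eqref{u-h1-errs}--\eqref{s-l2-errs} (and the solvability threshold in $h$) independent of $s$, extract a limit in $s$ at fixed $h$, then let $h\to 0$ --- but your execution of the two limit passages is genuinely different. Where you invoke finite dimensionality of $V_h\times\Sigma_h$ at fixed $h$ (an $s$-uniform bound gives a convergent subsequence in every norm; the first equation of \eqref{m11hs} is a mass-matrix solve, so $\sigma_{\tilde{m}s,h}$ is a continuous linear function of $u_{\tilde{m}s,h}$; $\det$ is polynomial in the coefficients, so all three relations pass to the limit), the paper instead runs a convex-analysis route: Lemma \ref{Arzela} to extract the limit $u_h$, Remark \ref{unif-grad-conv2} (Rockafellar) to pass to the limit in the gradient terms, the Riesz representation theorem to define $\sigma_h$, and then an $H^1$ bound, the compact embedding of $H^1(\widetilde{\Omega})$ into $L^4(\widetilde{\Omega})$, the projection identity \eqref{rellich-proj} and the cofactor expansion \eqref{mean-v}, \eqref{cof-est} to pass to the limit in the determinant equation. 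Your route is more elementary and, since the limit is taken at fixed $h$, entirely sufficient; what the paper's heavier machinery buys is mainly to display the convexity structure and the weak-convergence mechanism emphasized in Remark \ref{about-weak-s}, not a logical necessity. Likewise, for $h\to 0$ the paper extracts a further subsequence $u_{h_l}\to v$ and identifies $v=\tilde{u}$ by an $\epsilon$-argument, whereas your triangle inequality gives the uniform convergence on compact subsets directly and even with the explicit rate $O(h^{k-\frac{d}{2}})$, $k\geq d$. Both proofs rest on the same assertion --- which you flag explicitly and the paper states by appealing to section \ref{error} together with \eqref{int-Schauder2} --- that the error constants $C_{\tilde{m}s}$, the admissible range of $h$, and the constants of Lemma \ref{lem-1} depend on $u_{\tilde{m}s}$ only through norms over $\widetilde{\Omega}$, hence are $s$-uniform; and both defer the ``local'' (uniqueness) part of the statement to Remark \ref{local-unique}. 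One small point to make explicit: your use of $u_{\tilde{m}s}\to\tilde{u}$ uniformly on compact subsets of $\widetilde{\Omega}$ tacitly identifies $\tilde{u}$ with $u_{\tilde{m}}$ on $\widetilde{\Omega}$ via uniqueness for \eqref{m2} (Theorem \ref{weak-cont}), an identification the paper also uses implicitly.
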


\begin{proof} It remains to complete steps 6 and 7 of the program outlined in the introduction. 

{\bf Part 1}: The existence of a limit $u_{h}$.

By \eqref{u-h1-errs} and the inverse estimate \eqref{inverse2}, we have on $\widetilde{\Omega}$
$$
||u_{ms}-u_{ms,h} ||_{W^{1,\infty}} \leq C_{ms} h^{-\frac{d}{2}} ||u_{ms}-u_{ms,h} ||_{H^1} \leq   C_{ms} h^{k-\frac{d}{2}},
$$
where $C_{ms}$ depends on $||u_{ms}||_{C^{k+1}(\widetilde{\Omega})}$.

On each compact subset $K$ of $\Omega$, we have by the interior Schauder estimates,  \cite[Theorem 4]{Dinew} and \cite{Awanou-Std04} for details,
\begin{equation} \label{int-Schauder}
||u_{\tilde{m} s}||_{C^2(K)} \leq C_{\tilde{m}},
\end{equation}
where the constant $C_{\tilde{m}}$ depends on $\tilde{m}, c_2$, $\widetilde{\Omega}$, $d(K, \partial \Omega)$, $f_{\tilde{m}}$ and $\max_{x \in \Omega} |u_{\tilde{m} s}(x)|$. Moreover, by a bootstrapping argument 
we have
\begin{equation} \label{int-Schauder2}
||u_{\tilde{m} s}||_{C^{k+3}(K)} \leq ||u_{\tilde{m} s}||_{C^{k+3}(\widetilde{\Omega})}  \leq C_{\tilde{m}},
\end{equation}
as well. 

We conclude that the sequence in $s$ of piecewise convex functions $u_{ \tilde{m} s,h}$ is uniformly bounded on compact subsets of $\widetilde{\Omega}$, and hence by Lemma \ref{Arzela} has a convergent subsequence also denoted by $u_{\tilde{m} s,h}$ which converges pointwise to a function $u_{h}$. 
The function $u_{h}$ is piecewise convex as the pointwise limit of piecewise convex functions and the convergence is uniform on compact subsets of $\widetilde{\Omega}$.


Next, we note that for a fixed $h$, $u_{\tilde{m} s,h}$ is a piecewise polynomial in the variable $x$ of fixed degree $k$ and convergence of polynomials is equivalent to convergence of their coefficients. Thus $u_{h}$ is a piecewise polynomial of degree $k$. Moreover, the continuity conditions on $u_{\tilde{m} s,h}$ are linear equations involving its coefficients. Thus $u_{h}$ has the same continuity property as $u_{\tilde{m} s,h}$. In other words $u_{h} \in V_h$.

{\bf Part 2}: The existence of a limit function $\sigma_h$ and equations solved by the pair $(u_{h}, \sigma_h)$.

We have for $\tau \in \Sigma_h$
\begin{align*}
 - (\div \tau, D u_{\tilde{m} s,h}) + \< D u_{\tilde{m} s,h}, \tau n \> & = - \sum_{K \in \mathcal{T}_h } \int_{K \cap \widetilde{\Omega}  } (D u_{\tilde{m} h,s}) \cdot \div \tau \ud x \\
 & \qquad \qquad +  \int_{K \cap \partial  \widetilde{\Omega}} (D u_{\tilde{m} h,s}) \cdot (\tau n ) \ud x.
\end{align*}

Recall that $u_{\tilde{m} s,h}$ is  convex on $K \cap  \widetilde{\Omega}$ and converges uniformly on compact subsets of $K \cap  \widetilde{\Omega}$ to $u_{h}$.  
In this part of the proof, we consider the continuous extension of $u_{\tilde{m} s,h}$ and $u_h$ up to the boundary $\partial  \widetilde{\Omega}$. We then obtain from Remark \ref{unif-grad-conv2}

\begin{align*}
 \int_{K \cap \widetilde{\Omega}} (D u_{\tilde{m} h,s}) \cdot \div \tau \ud x & \to  \int_{K \cap \widetilde{\Omega} } (D u_{h}) \cdot \div \tau \ud x 
 \end{align*}
 and
 \begin{align*}
 \int_{K \cap \partial \widetilde{\Omega} } (D u_{\tilde{m} h,s}) \cdot (\tau n_s) \ud x & \to \int_{K \cap \partial \widetilde{\Omega} } (D u_{h}) \cdot (\tau n) \ud x.
\end{align*}
We conclude that as $s \to \infty$
\begin{align} \label{Fh-tau}
- (\div \tau, D u_{\tilde{m} s,h}) + \< D u_{\tilde{m} s,h}, \tau n \> \to F_{h}(\tau) \coloneqq (\div \tau, D u_{h}) - \< D u_{h}, \tau n\>.
\end{align}
For $h$ fixed, we have by \eqref{inverse1} and \eqref{trace-inverse}
\begin{align*}
| F_{h}(\tau)| & \leq C ( ||\tau||_{H^1} ||D u_{h}||_{L^2} +  ||\tau||_{L^2(\partial \widetilde{\Omega}) } ||D u_{h}||_{L^2(\partial \widetilde{\Omega} )} ) \\
                         & \leq C ( h^{-1} ||D u_{h}||_{L^2} + h^{-\frac{1}{2}}  ||D u_{h}||_{L^2(\partial \widetilde{\Omega} )} )  ||\tau||_{L^2}.
\end{align*}
Thus $F_{h}$ is continuous on $\Sigma_h$ and by the Riesz representation theorem, there exists a unique $\sigma_{h} \in \Sigma_h$ such that
$$
 F_{h}(\tau) = (\sigma_{h},\tau).
$$ 
 In other words $(u_{h}, \sigma_{h} ) \in V_h \times \Sigma_h$ solves
 
 \begin{equation} \label{eq-part1}
 (\sigma_{h},\tau) + (\div \tau, D u_{h}) - \< D u_{h}, \tau n \>  = 0, \forall \tau \in \Sigma_{h}.
 \end{equation}
 
 The result can also be stated as follows:
 \begin{align} \label{weak-cvg-sigma}
( \sigma_{\tilde{m} s,h},\tau) \to (\sigma_h,\tau) \ \text{as} \ s \to \infty \ \text{and} \ \forall \tau \in \Sigma_h.
 \end{align}
 
 It remains to show that 
 \begin{equation} \label{eq-part2}
 (\det \sigma_{h},v ) = (f ,v), \forall v \in V_h \cap H_0^1(\widetilde{\Omega}).
 \end{equation}
 
 By the error estimate for smooth solutions \eqref{s-l2-errs} and the interior Schauder estimate \eqref{int-Schauder2}, we have
 $$
 ||\sigma_{\tilde{m} s,h}||_{L^2} \leq C,
 $$
 for a constant $C$ which depends on $\widetilde{\Omega}$ and $\tilde{m}$ but is independent of $s$. By assumption $\sigma_{\tilde{m} s,h} \in H^1(\widetilde{\Omega})$ and by 
\eqref{inverse1}
$$
||\sigma_{\tilde{m} s,h} ||_{H^1} \leq C h^{-1} ||\sigma_{\tilde{m} s,h} ||_{L^2} \leq C_{} h^{-1}.
$$
 Since $d=2,3$, by the compactness of the embedding of $H^1(\widetilde{\Omega})$ into $L^4(\widetilde{\Omega})$, up to a subsequence $\sigma_{\tilde{m} s,h}$ converges to a function $\hat{\sigma}_h$ in $L^4(\widetilde{\Omega})$.
 We thus have
 \begin{align} \label{weak-cvg-sigma2}
( \sigma_{\tilde{m} s,h},\tau) \to (\hat{\sigma}_h,\tau) \ \text{as} \ s \to \infty \ \text{and} \ \forall \tau \in \Sigma_h.
 \end{align}
By \eqref{weak-cvg-sigma} and \eqref{weak-cvg-sigma2} we have
\begin{equation} \label{rellich-proj}
(\hat{\sigma}_{h}, \tau) = (\sigma_{h},\tau) \ \forall \tau \in \Sigma_h, \ \text{i.e.} \  \sigma_h = P_{\Sigma_h}(\hat{\sigma}_h),
\end{equation}
 where we denote by $P_{\Sigma_h}$ the $L^2$ projection into $\Sigma_h$.
 
We have by \eqref{mean-v} and \eqref{rellich-proj} for some $r \in [0,1]$
\begin{align*}
\int_{\widetilde{\Omega}} (\det \sigma_{\tilde{m} s,h} - \det \sigma_{h}) v \ud x & = \int_{\widetilde{\Omega}}  \cof ( r \sigma_{\tilde{m} s,h} + (1-r) \sigma_{h} ):( \sigma_{\tilde{m} s,h} - \sigma_{h}) v \ud x \\
& = \int_{\widetilde{\Omega}} P_{\Sigma_h} \big( v \cof ( r \sigma_{\tilde{m} s,h} + (1-r) \sigma_{h} ) \big):( \sigma_{\tilde{m} s,h} - \sigma_{h})  \ud x \\
& = \int_{\widetilde{\Omega}} P_{\Sigma_h} \big( v \cof ( r \sigma_{\tilde{m} s,h} + (1-r) \sigma_{h} ) \big):( \sigma_{\tilde{m} s,h} - \hat{\sigma}_{h})  \ud x
\end{align*}
and thus
\begin{align*}
\bigg| \int_{\widetilde{\Omega}} (\det \sigma_{\tilde{m} s,h} - \det \sigma_{h}) v \ud x \bigg| & \leq C ||v||_{L^{\infty}}  || \cof ( r \sigma_{\tilde{m} s,h} + (1-r) \sigma_{h} )||_{L^{\infty}} || \sigma_{\tilde{m} s,h} - \hat{\sigma}_{h}||_{L^1} \\
& \leq C  ||v||_{L^{\infty}} (||\sigma_{\tilde{m} s,h}||_{L^{\infty}} + ||\sigma_{h}||_{L^{\infty} })^{d-1} || \sigma_{\tilde{m} s,h} - \hat{\sigma}_{h}||_{L^1} \\
& \leq C  ||v||_{L^{\infty}} (h^{-\frac{d}{2}} ||\sigma_{\tilde{m} s,h}||_{L^{2}} + h^{-\frac{d}{2}} ||\sigma_{h}||_{L^{2} })^{d-1} || \sigma_{\tilde{m} s,h} - \hat{\sigma}_{h}||_{L^4} \\
& \leq C  ||v||_{L^{\infty}} (C h^{-\frac{d}{2}} + h^{-\frac{d}{2}} ||\sigma_{h}||_{L^{2} })^{d-1} || \sigma_{\tilde{m} s,h} - \hat{\sigma}_{h}||_{L^4}\\
& \to 0 \ \text{as} \ s \to \infty.
\end{align*}
 
On the other hand, $(\det \sigma_{\tilde{m} s,h},v) = (f_{\tilde{m}}, v)$. It follows that  \eqref{eq-part2} holds, i.e.  since $u_h=u_{\tilde{m}}$ on $\partial \widetilde{\Omega}$, the pair $(u_h,\sigma_h)$ solves \eqref{m11h}. 

 {\bf Part 3}: Uniform convergence on compact subsets of $\widetilde{\Omega}$ of $u_h$ to $\tilde{u}$.
 
Since $u_{\tilde{m} s,h}$ is uniformly bounded on compact subsets of $\widetilde{\Omega}$, so is $u_h$. It follows from Lemma \ref{Arzela} that there exists a subsequence $u_{h_l}$ which converges pointwise to a piecewise convex function $v$. The latter is continuous on $\widetilde{\Omega}$ as it is locally finite. Moreover the convergence is uniform on compact subsets of $\widetilde{\Omega}$.

Let $K$ be a compact subset of $\widetilde{\Omega}$. There exists a subsequence $u_{\tilde{m}, s_l, h}$ which converges uniformly to $u_h$ on $K$.  By the uniform convergence of $u_{ms}$ to $u_m$,  $u_{\tilde{m}, s_l}$ converges uniformly to $\tilde{u}$ on $K$.

Let now $\epsilon >0$. 
Since $u_{h_l}$ converges uniformly on $K$ to $v$, $\exists l_0$ such that $\forall l \geq l_0$ $|u_{h_l}(x) - v(x)|< \epsilon/6$ for all $x \in K$.
 
There exists $l_1 \geq 0$ such that for all $l \geq \max \{ \,l_0,l_1 \, \}$, 
$|u_{\tilde{m}  s_l,h_l}(x) - u_{h_l}(x)|< \epsilon/6$ for all $x \in K$.

Moreover, there exists $l_2 \geq 0$ such that for all $l \geq \max \{ \,l_0,l_1, l_2 \, \}$, $|u_{\tilde{m}  s_l}(x) - \tilde{u}_{}(x)|< \epsilon/6$ for all $x \in K$.

By \eqref{u-h1-errs} we have on $K$  $|u_{\tilde{m}  s,h_l}(x) - u_{\tilde{m} s}(x)|\leq C h_l$ for all $x \in K$. We recall that the constant $C$ is independent of $s$ but depends on $\widetilde{\Omega}$ and $\tilde{m}$.

We conclude that for $ l \geq \max \{ \,l_0,l_1, l_2 \, \}$, $|\tilde{u}(x) - v(x) | <  \epsilon/2 + C h_l$ for all $x \in K$.
We therefore have for all $\epsilon >0 $ $|\tilde{u} (x) - v(x) | < \epsilon$. We conclude that $\tilde{u}=v$ on $K$. 

Since $u_h=u_{\tilde{m}}$ on $\partial \widetilde{\Omega}$ it follows that $v=\tilde{u}$ on $\partial \widetilde{\Omega}$. This proves that $\tilde{u}=v$ on $\widetilde{\Omega}$.

We conclude that $u_h$ converges uniformly on compact subsets of $\widetilde{\Omega}$ to $\tilde{u}$.

 The proof is complete.
 
\end{proof}

\begin{rem} \label{local-unique}
Let $x_0 \in \Omega$. We may assume that the solution $u_h$ is strictly convex by identifying $u_h$ with $u_h + \epsilon |x-x_0|^2$, where $| . |$ denotes the Euclidean norm in $\R^d$ and $\epsilon$ is taken to be close to machine precision. 
The arguments in \cite{AwanouLiMixed1} can then be repeated to show that the solution of \eqref{m11h} given in the previous theorem is locally unique. Using the notation of  \cite{Awanou-Quadratic} for the 2D problem, one only needs to take the rescaling parameter $\alpha$ equal to $\nu h^{k+2}$.
\end{rem}

\begin{rem} \label{local-unique2}
For $k \geq d+1$, a different proof of local uniqueness can be given based on the fixed point argument of \cite{Awanou-Quadratic} . Arguing as in \cite[section 7.1]{Awanou-Std04v4}, one shows that $\det \sigma_h >0$. We can then define a discrete strictly convex function as determined by a pair $(u_h, \sigma_h)$ for which $\sigma_h$ is a positive definite matrix. However, in \cite{Awanou-Quadratic}, to handle the quadratic case, we essentially used the positive definiteness of $D^2 u_h$ computed piecewise. Similar arguments can be given by using instead the positive definiteness of $\sigma_h$. Given the length of this paper, we wish to address this approach in a separate work.

\end{rem}

\begin{rem}
The only reason we assume that $k \geq 3$ in three dimension is that 
 the numerical solution in the case $k=2$ is much closer to the Lagrange
interpolant than what can be observed numerically using the approximation property of the
Lagrange finite element spaces. The solution is to use a rescaled version of the equation, i.e. $\det D^2 \beta u= \beta^d f$, for a suitable $\beta >0$. The same argument applies to the analysis in \cite{Awanou-Std01}. 
\end{rem}

\begin{rem} \label{about-weak-s}
One of the main tools used in \cite{Awanou-Std-fdv5} and \cite{Awanou-Std04v4} is the weak convergence of Monge-Amp\`ere measures, c.f. Lemma \ref{weak-s}. It requires the approximation of the scalar variable to be at least piecewise quadratic. In this paper, we were able to take advantage of the mixed formulation to prove weak convergence.
\end{rem}

\begin{rem}
If one assumes that the domain $\Omega$ is smooth and uniformly convex, one can take $\widetilde{\Omega} = \Omega$ and use global Schauder estimates, c.f. \cite{Awanou-Std04v4} for details. For the implementation, one should use Nitsche method to enforce the boundary condition and curvilinear coordinates for elements near the boundary. We refer to \cite{Brenner2010b} for this approach in the context of smooth solutions. One can also use isogeometric analysis as in \cite{Awanou-Iso}.
\end{rem}



\begin{thebibliography}{10}
\providecommand{\url}[1]{{#1}}
\providecommand{\urlprefix}{URL }
\expandafter\ifx\csname urlstyle\endcsname\relax
  \providecommand{\doi}[1]{DOI~\discretionary{}{}{}#1}\else
  \providecommand{\doi}{DOI~\discretionary{}{}{}\begingroup
  \urlstyle{rm}\Url}\fi

\bibitem{Awanou-Iso}
Awanou, G.: Isogeometric method for the elliptic {M}onge-{A}mp\`ere equation.
\newblock In: Approximation Theory, XIV (San Antonio, TX, 2013), pp. 1--13
  (2014)

\bibitem{Awanou-Quadratic-mixed}
Awanou, G.: Low order mixed finite element approximations of the
  {M}onge-{A}mp\`ere equation (2014).
\newblock Manuscript

\bibitem{Awanou-Std-fdv5}
Awanou, G.: On standard finite difference discretizations of the elliptic
  {M}onge-{A}mp\`ere equation (2014).
\newblock {h}ttp://arxiv.org/pdf/1311.2812v5.pdf

\bibitem{AwanouPseudo10}
Awanou, G.: Pseudo transient continuation and time marching methods for
  {M}onge-{A}mp\`ere type equations.
\newblock Advances in Computational Mathematics pp. 1--29 (2014).
\newblock \doi{10.1007/s10444-014-9391-y}.
\newblock \urlprefix\url{http://dx.doi.org/10.1007/s10444-014-9391-y}

\bibitem{Awanou-Std04v4}
Awanou, G.: Standard finite elements for the numerical resolution of the
  elliptic {M}onge-{A}mp\`ere equation: {A}leksandrov solutions (2014).
\newblock {h}ttp://arxiv.org/pdf/1310.4568v4.pdf

\bibitem{Awanou-Std04}
Awanou, G.: Standard finite elements for the numerical resolution of the
  elliptic {M}onge-{A}mp\`ere equation: {A}leksandrov solutions (2014).
\newblock {h}ttp://homepages.math.uic.edu/\~{}awanou/up.html

\bibitem{Awanou-Std01}
Awanou, G.: Standard finite elements for the numerical resolution of the
  elliptic {M}onge-{A}mp\`ere equation: classical solutions.
\newblock IMA Journal of Numerical Analysis  (2014).
\newblock \doi{10.1093/imanum/dru028}.
\newblock
  \urlprefix\url{http://imajna.oxfordjournals.org/content/early/2014/05/30/imanum.dru028.abstract}

\bibitem{Awanou-Quadratic}
Awanou, G.: Quadratic mixed finite element approximations of the
  {M}onge-{A}mp\`ere equation in 2 {D} (2015).
\newblock To appear in Calcolo

\bibitem{MongeC1Alex}
Awanou, G.: Smooth approximations of the {A}leksandrov solution of the
  {M}onge-{A}mp\`ere equation.
\newblock Commun. Math. Sci. \textbf{13}(2), 427--441 (2015)

\bibitem{AwanouLiMixed1}
Awanou, G., Li, H.: Error analysis of a mixed finite element method for the
  {M}onge-{A}mp\`ere equation.
\newblock Int. J. Num. Analysis and Modeling \textbf{11}, 745--761 (2014)

\bibitem{Blocki97}
B{\l}ocki, Z.: Smooth exhaustion functions in convex domains.
\newblock Proc. Amer. Math. Soc. \textbf{125}(2), 477--484 (1997)

\bibitem{Bohmer2008}
B{\"o}hmer, K.: On finite element methods for fully nonlinear elliptic
  equations of second order.
\newblock SIAM J. Numer. Anal. \textbf{46}(3), 1212--1249 (2008)

\bibitem{Brenner2010b}
Brenner, S.C., Gudi, T., Neilan, M., Sung, L.Y.: {$C^0$} penalty methods for
  the fully nonlinear {M}onge-{A}mp\`ere equation.
\newblock Math. Comp. \textbf{80}(276), 1979--1995 (2011)

\bibitem{Brenner02}
Brenner, S.C., Scott, L.R.: The mathematical theory of finite element methods,
  \emph{Texts in Applied Mathematics}, vol.~15, second edn.
\newblock Springer-Verlag, New York (2002)

\bibitem{Glowinski2014}
Caboussat, A., Glowinski, R., Sorensen, D.C.: A least-squares method for the
  numerical solution of the {D}irichlet problem for the elliptic
  {M}onge-{A}mp\`ere equation in dimension two.
\newblock ESAIM Control Optim. Calc. Var. \textbf{19}(3), 780--810 (2013)

\bibitem{Caffarelli1984}
Caffarelli, L., Nirenberg, L., Spruck, J.: The {D}irichlet problem for
  nonlinear second-order elliptic equations. {I}. {M}onge-{A}mp\`ere equation.
\newblock Comm. Pure Appl. Math. \textbf{37}(3), 369--402 (1984)

\bibitem{Davydov12}
Davydov, O., Saeed, A.: Numerical solution of fully nonlinear elliptic
  equations by {B}\"ohmer's method.
\newblock J. Comput. Appl. Math. \textbf{254}, 43--54 (2013)

\bibitem{Dean2003}
Dean, E.J., Glowinski, R.: Numerical solution of the two-dimensional elliptic
  {M}onge-{A}mp\`ere equation with {D}irichlet boundary conditions: an
  augmented {L}agrangian approach.
\newblock C. R. Math. Acad. Sci. Paris \textbf{336}(9), 779--784 (2003)

\bibitem{Dinew}
Dinew, S., Zhang, X., Zhang, X.: The {$C^{2,\alpha}$} estimate of complex
  {M}onge-{A}mp\`ere equation.
\newblock Indiana Univ. Math. J. \textbf{60}(5), 1713--1722 (2011)

\bibitem{Feng2009a}
Feng, X., Neilan, M.: Error analysis for mixed finite element approximations of
  the fully nonlinear {M}onge-{A}mp\`ere equation based on the vanishing moment
  method.
\newblock SIAM J. Numer. Anal. \textbf{47}(2), 1226--1250 (2009)

\bibitem{Froese13}
Froese, B.D., Oberman, A.M.: Convergent filtered schemes for the
  {M}onge-{A}mp\`ere partial differential equation.
\newblock SIAM J. Numer. Anal. \textbf{51}(1), 423--444 (2013)

\bibitem{Gilbarg2001}
Gilbarg, D., Trudinger, N.S.: Elliptic partial differential equations of second
  order.
\newblock Classics in Mathematics. Springer-Verlag, Berlin (2001).
\newblock Reprint of the 1998 edition

\bibitem{GlowinskiICIAM07}
Glowinski, R.: Numerical methods for fully nonlinear elliptic equations.
\newblock In: I{CIAM} 07---6th {I}nternational {C}ongress on {I}ndustrial and
  {A}pplied {M}athematics, pp. 155--192. Eur. Math. Soc., Z\"urich (2009)

\bibitem{Guti'errez2001}
Guti{\'e}rrez, C.E.: The {M}onge-{A}mp\`ere equation.
\newblock Progress in Nonlinear Differential Equations and their Applications,
  44. Birkh\"auser Boston Inc., Boston, MA (2001)

\bibitem{Hartenstine2006}
Hartenstine, D.: The {D}irichlet problem for the {M}onge-{A}mp\`ere equation in
  convex (but not strictly convex) domains.
\newblock Electron. J. Differential Equations pp. No. 138, 9 pp. (electronic)
  (2006)

\bibitem{Lakkis11b}
Lakkis, O., Pryer, T.: A finite element method for nonlinear elliptic problems.
\newblock SIAM J. Sci. Comput. \textbf{35}(4), A2025--A2045 (2013)

\bibitem{Neilan2013}
Neilan, M.: Finite element methods for fully nonlinear second order {PDE}s
  based on a discrete {H}essian with applications to the {M}onge--{A}mp\`ere
  equation.
\newblock J. Comput. Appl. Math. \textbf{263}, 351--369 (2014)

\bibitem{Oliker1988}
Oliker, V.I., Prussner, L.D.: On the numerical solution of the equation
  {$(\partial\sp 2z/\partial x\sp 2)(\partial\sp 2z/\partial y\sp
  2)-((\partial\sp 2z/\partial x\partial y))\sp 2=f$} and its discretizations.
  {I}.
\newblock Numer. Math. \textbf{54}(3), 271--293 (1988)

\bibitem{Rauch77}
Rauch, J., Taylor, B.A.: The {D}irichlet problem for the multidimensional
  {M}onge-{A}mp\`ere equation.
\newblock Rocky Mountain J. Math. \textbf{7}(2), 345--364 (1977)

\bibitem{Rockafellar70}
Rockafellar, R.T.: Convex analysis.
\newblock Princeton Mathematical Series, No. 28. Princeton University Press,
  Princeton, N.J. (1970)

\bibitem{Royden}
Royden, H.L.: Real analysis, third edn.
\newblock Macmillan Publishing Company, New York (1988)

\bibitem{Trudinger08}
Trudinger, N.S., Wang, X.J.: Boundary regularity for the {M}onge-{A}mp\`ere and
  affine maximal surface equations.
\newblock Ann. of Math. (2) \textbf{167}(3), 993--1028 (2008)

\end{thebibliography}

\end{document}